\documentclass[preprint, 12pt]{elsarticle}

\usepackage[usenames,dvipsnames]{xcolor}
\usepackage[centertags]{amsmath}
\usepackage{multirow,newlfont,amssymb,amsfonts,amsthm,rotating}

\newcommand{\bb}{\begin{equation}}
\newcommand{\ee}{\end{equation}}

\def\p{\partial}

\def\x1{{\xi }_{xx}}
\def\x2{{\xi }_{yy}}
\def\x3{{\xi }_{xy}}

\def\e1{{\eta }_{xx}}
\def\e2{{\eta }_{yy}}
\def\e3{{\eta }_{xy}}

\def\eps{\varepsilon}

\def\be{\beta}

\def\r{\rho}

\def\l1{{\lambda}_1}

\def\kd{\partial}

\def\bb{\begin{equation}}
\def\ee{\end{equation}}
\def\ba{\begin{array}}
\def\ea{\end{array}}
\def\beqn{\begin{eqnarray}}
\def\eeqn{\end{eqnarray}}

\newcommand{\pd}[1]{ \ensuremath{\frac{\partial}{\partial #1}} }

\newtheorem{thm}{Theorem}[section]
\newtheorem{lem}[thm]{Lemma}

\newtheorem{case}{Case}[subsection]
\newtheorem{subcase}{$\bullet$ Subcase}[case]
\newdefinition{defn}{Definition}[section]

\theoremstyle{remark}
\newtheorem{rem}[defn]{Remark}

\makeatletter
\def\ps@pprintTitle{%
  \let\@oddhead\@empty
  \let\@evenhead\@empty
  \def\@oddfoot{\reset@font\hfil\thepage\hfil}
  \let\@evenfoot\@oddfoot
}
\makeatother

\allowdisplaybreaks[1]

\begin{document} 


\begin{frontmatter}


\title{Group Classification of a Generalized Black--Scholes--Merton Equation}


\author[imecc]{Y.~Bozhkov}
\ead{bozhkov@ime.unicamp.br}

\author[imecc]{S.~Dimas\corref{cor}}
\ead{spawn@math.upatras.gr}

\cortext[cor]{Corresponding author}


\address[imecc]{Instituto de Matem\'atica, Estat\'istica e Computa\c{c}\~ao Cient\'ifica - IMECC\\ 
Universidade Estadual de Campinas - UNICAMP, Rua S\'ergio Buarque de Holanda, $651$\\
$13083$-$859$ - Campinas - SP, Brasil}


\begin{abstract}
	The complete group classification of a generalization of the Black--Scholes--Merton model is carried out by making use of the underlying equivalence and additional equivalence transformations.  For each non linear case obtained through this classification, invariant solutions are given.  To that end,  two boundary conditions of financial interest are considered, the terminal and the barrier option conditions.
  \end{abstract}

\begin{keyword}
Black--Scholes--Merton equation \sep Lie point symmetry \sep group classification \sep computer assisted research
\MSC[2010] 35A20 \sep 70G65 \sep 70M60 \sep 97M30 \sep 68W30
\end{keyword}

\end{frontmatter}

\section{Introduction}

In this paper we study the following {\it semilinear} partial differential equation
\begin{equation}\label{u01}
u_t+\frac{1}{2} {\sigma }^2 x^2 u_{xx}+ r x u_x +f(u)=0
\end{equation}
which can be considered as a weakly nonlinear generalization of the celebrated Black--Scholes--Merton equation
\begin{equation}\label{u02}
u_t+\frac{1}{2} {\sigma }^2 x^2 u_{xx}+ r x u_x -r u =0
\end{equation}
that plays a remarkable role in financial Mathematics.  

Usually the terminal condition
\begin{equation}\label{tc}
	u(x,T)=1,
\end{equation}
describing the evolution of standard or ``vanilla"  products; the price of a zero-coupon bond (or of a financial option), $u(x,t)$, which is exercised  when  $t=T$ is considered  \cite{BlaScho72,BlaScho73,Me74,Ugur2k8}.

Furthermore, Eq.~\eqref{u02} may satisfy  also other kinds of options, like the  barrier option.  A barrier option can be considered an exotic option and as such has features that makes it more complex than  the ``vanilla" option, \cite{ha2k11, Kwo2k8, HaSoLea2k13}. The underlying idea is that now a barrier $H(t)$ exists and when the asset price $x$ crosses it, the barrier option $u(x,t)$  becomes extinguished or  comes into existence. Those two types are also known as  \emph{down-and-out} and \emph{down-and-in} respectively. Often a rebate, $R(t)$, is paid if the option is extinguished.  In what follows we shall consider the down-and-out type.

In the context of the Black--Scholes--Merton equation, the  barrier option is expressed by the conditions
\begin{subequations}\label{barrierCond}
	\begin{align}
		&u(H(t),t)=R(t),\label{barrierCond1}\\ 
		&u(x,T)=\max(x-K,0),	
	\end{align}
\end{subequations}
where the barrier option $u(x,t)$ satisfies Eq.~\eqref{u02} for $x>H(t),\ t<T$,  $T$ again is the terminal time where the barrier option is exercised and $K$ is the strike price. A common assumption for the barrier function $H$ is to have the exponential form
\begin{equation}\label{H0}
H(t) = bKe^{-\alpha t},
\end{equation}
where $a\ge0$ and $0\le b\ge1$ \cite[p. 187]{Kwo2k8}.

Although  Eq.~\eqref{u01} is just a mathematical abstraction, it can also be derived from the same stochastic argument as the Black--Scholes--Merton equation \eqref{u02} is derived. In particular, by assuming that the wealth of the portfolio involves a nonlinear function of the value of an option $u(x,t)$ instead of a linear function   and utilizing the same boundary conditions \eqref{tc}, \eqref{barrierCond}. Finally, the constants $r$ and ${\sigma }^2$ represent the risk-free interest rate and the variance of the rate of the return on $u$ respectively. 

The main purpose of this paper is to carry out a complete group classification of a generalized Black--Scholes--Merton equation of type (\ref{u01}).  Recall that to perform a complete group classification of a differential equation (or a system of differential equations) involving arbitrary functions or/and parameters, means to find the Lie point symmetry group $\mathcal{G}$ for the most general case, and then to find specific forms of the differential equation for which $\mathcal{G}$ can be enlarged, \cite[p. 178]{Olver2k}. Quite often, there is good physical or geometrical motivation to study such cases. Moreover for the Black--Scholes--Merton equation \eqref{u02} the motivation is a financial one: for more than three decades this equation has shown its value and gain the trust of the market. But as banks and hedge funds relied more and more on this and its siblings equations they became more and more vulnerable to mistakes or over-simplifications in the mathematics involved in deriving them; it is a linear equation and as such can't follow the dynamic nature and the intricacies  of the modern market.

In \cite{GazIbr98,SinLeaHara2k8,DiAndTsLe2k9} the Black--Scholes--Merton equation (\ref{u02}) as well as other linear evolutionary equations which appear in financial Mathematics have been investigated from the point of view of the S. Lie symmetry theory. Also, the barrier option was studied recently for Eq.~\eqref{u02} under the same prism \cite{HaSoLea2k13}.  The present work can be considered as an extension of that research. As such,  the  linear case $f(u)=\alpha u +\beta$, to avoid a repetition of  already known and established facts, will not be treated explicitly  in the present paper. It will only briefly accounted for the purpose of completeness.

Afterwards, for each nonlinear case of the obtained group classification we look for group invariant solutions taking also into account the boundary conditions \eqref{tc}, \eqref{barrierCond}.

Before we proceed with the group classification, we would  like to remind some facts concerning the group analysis of differential equations.

The symmetry analysis of differential equations is a method first developed in the 19th century by Sophus Lie. One of the main benefits of this method is that by following a completely algorithmic procedure one is able to determine the symmetries of a differential equation or systems of differential equations. \textit{Grosso modo}, the symmetries of a differential equation transform solutions of the equation to other solutions. The Lie point symmetries comprise a structural property of the equation, in essence it is the  DNA of an equation. The knowledge of the symmetries of an equation enables one to utilize them for a variety of purposes, from obtaining analytical solutions and reducing its order to finding of integrating factors and conservation laws. In fact, many, if not all, of the different empirical methods for solving ordinary differential equations (ODEs) we have learned from standard courses at the undergraduate level emerge from a symmetry. For instance, having at our disposal a Lie point symmetry of a first order ODE, we can immediately get explicitly an integrating factor. Furthermore, even the knowledge of a trivial solution of the equation can be used for creating nontrivial solutions by using the equation's symmetries. And all these are due to the rich underlining algebraic structure of the Lie groups and algebras with which we give flesh to the symmetries of a differential equation. 

Another important characteristic of the symmetry method is that in some situations the symmetries of an equation may indicate that it can be transformed to a linear equation. In addition, its symmetries provide the means to construct the needed transformation. A strong indication for that is the existence of an infinite dimensional Lie algebra, \cite{bk}. Recently, it has been shown, by using only the algebraic properties of the symmetries of the equations involved, that the majority  of the well-known differential equations used in economics are linked via an invertible transformation to the heat equation, \cite{DiAndTsLe2k9}. Although for some of the studied equations a transformation between them and the heat equation was already known by other means, the authors of \cite{DiAndTsLe2k9} 
used the algebraic properties of the symmetries of the equations and followed a straightforward and algorithmic approach.  

In fact, over the last forty years there was a considerable development in the mathematical analysis of partial differential equations which arise in financial Mathematics. 
However when one reads various papers devoted to the resolution of evolution partial differential equations which arise as, more or less, the final stage of the mathematical modeling of some financial process, one can see some \textit{ad hoc} and naive procedures to bring the considered equation under control. A viable alternative way is the employment of symmetry analysis of differential equations.
 
Moreover,  one valuable tool when considering classes of equations is the use of the \emph{equivalence} or \emph{admissible transformations}, \cite{Ovsiannikov82,Ibra2k9,PoEshra2k5}. Equivalence transformations of a class of differential equations are point transformations that keep this class invariant, in other words they map an equation from this class  to another member of the same class. In the recent years equivalence transformations have found much application either as a stand alone analytic  tool for the group classification of differential equations, \cite{RoTo99}, or at the core of the \emph{enhanced group analysis},  \cite{CheSeRa2k8,CaBiPo2k11,IvaPoSo2k10,VaPoSo2k9}. 
 
One of the advantages of this approach, as already emphasized, is that it provides a well defined algorithmic procedure which essentially enables one to find the involved linearizing transformations, conservation laws, invariant solutions, etc. On the other hand, the calculations involved are usually very difficult and extensive even for the simplest equations.  Thus, it may become very tedious and error prone. For this reason the real progress in this area occurred in the last few decades with the advances in computer technology and the development of computer algebra systems like {\it Mathematica, Maple, Reduce}, etc.  Based on these systems, a handful of symbolic packages for determining the symmetries of differential equations exists, \cite{Head93,Nucci1,Nucci2,Baumann2k}. One such symbolic package,  based on {\it Mathematica}, \cite{Wolfram2k10}, has been devised and developed by S. Dimas as part of his PhD thesis, \cite{Dimas2k8}. The package, named SYM, \cite{Dimas2k8,DiTs2k5a,DiTs2k6}, was developed from the ground up using the symbolic manipulation power of  {\it Mathematica} and the artificial intelligence capabilities  which it offers.  It is being used by many researchers around the world. It was extensively used for all the results in the present paper, both for the interactive manipulation of the found symmetries and for the classification of the equations employing the symbolic tools provided by it. 

This paper is organized as follows. In section 2 we present the basic concepts of the Lie point symmetry approach to differential equations used in the paper. In section 3 we obtain the complete group classification of generalized Black--Scholes--Merton equations of type (\ref{u01}). In section 4 we provide invariant solutions for each non-linear case found by the group classification under the two specific boundary problem studied, the ``vanilla" option and the barrier option.  Finally, in section 5 we discuss the obtained results as well as  possible applications.

\section{Preliminaries}

In this section we expose some notions of the modern group analysis that will be encountered in the main sections of the article suitably adapted to the article's needs. For a full treatise in the subject there is a wealth of classical texts that encompass all aspects of the theory, \cite{Olver2k,bk,Ibr85,Ovsiannikov82,Hydon2k,Stephani90}.

A Lie point symmetry of Eq.~\eqref{u01} is a one-parameter transformation of the independent and dependent variables
\begin{align}\label{LG}
	\bar x &= \bar x(x,t,u,\epsilon),\notag\\
	\bar t &= \bar t(x,t,u,\epsilon),\\
	\bar u &= \bar u(x,t,u,\epsilon),\notag
\end{align} 
that keeps \eqref{u01} invariant:
\bb\label{SC}
	\bar u_{\bar t}+\frac{1}{2} {\sigma }^2 \bar x^2 \bar u_{\bar x\bar x}+ r \bar x \bar u_{\bar x} +f(\bar u)=0.
\ee
By substituting in \eqref{SC}  the point transformations \eqref{LG} and using \eqref{u01} we obtain an equation called the \emph{symmetry condition}. From it the exact form of the point transformations is obtained.
However, trying to determine the transformation \eqref{LG}  from  \eqref{SC} is more challenging than trying to solve the Eq.~\eqref{u01} itself! 

The novelty behind S. Lie idea resides on \textit{linearizing} the problem of determining the symmetries of an equation. To do that, the notion of the \textit{infinitesimal generator} is introduced. Namely, the infinitesimal generator is a differential operator 
 $$ X={\xi }^1\frac{\kd }{\kd x}+{\xi }^2\frac{\kd }{\kd t}+ \eta  \frac{\kd }{\kd u}$$
with the functions ${\xi }^i={\xi}^i(x,t,u)$ and $\eta=\eta(x,t,u)$, called infinitesimals, defined as 
 \begin{align}\label{infinitesimals}
 	\xi^1 = \left.\frac{\kd \bar x}{\kd \eps } \right\rvert_{\eps =0}&,& \xi^2 = \left.\frac{\kd \bar t}{\kd \eps } \right\rvert_{\eps =0}&,& \eta = \left.\frac{\kd \bar u}{\kd \eps }\right\rvert_{\eps =0}.
\end{align}
An infinitesimal generator of this type determines a Lie point symmetry of Eq.~\eqref{u01}, if and only if, its action on the equation will be, modulo the equation itself,  identically zero, that is:
\bb\label{lsc}
	\left.X^{(2)}\left[u_t+\frac{1}{2} {\sigma }^2 x^2 u_{xx}+ r x u_x +f(u)\right]\right\rvert_\eqref{u01} \equiv 0,
\ee
where $X^{(2)}$ is the second order prolongation of the operator $X$ given by 
\bb\label{prolong}
	X^{(2)}= X + {\eta }^{(1)}_{i} \frac{\kd }{\kd u_i}+ {\eta }^{(2) }_{i_1 i_2} \frac{\kd }{\kd u_{i_1 i_2}},\ i,i_j=1,2
\ee
and
 \begin{equation}\label{prolongedCoefficients}
 	{\eta }^{(1)}_{i}=D_i\eta - (D_i{\xi }^j)u_j,\,\text{and\ }\, {\eta }^{(2)}_{i_1 i_2} =D_{i_2} {\eta }^{(1)}_{i_1} - (D_{i_2}{\xi }^{j})u_{i_1 j},\ i,i_k,j=1,2
 \end{equation}
where $ u_i = \frac{\kd u}{\kd x^i},\  u_{i_1i_2} = \frac{\kd^2 u}{\kd x^{i_1}\kd x^{i_2}},\ i,i_j=1,2$ and $(x^1,x^2)=(x,t)$, where the Einstein summation convention implied over the indexes. 
From the Eq.~\eqref{lsc}, called \emph{linearized symmetry condition}, an overdetermined system of linear partial differential equations emerges. By solving this system, called the \emph{determining equations}, we find the infinitesimals and hence we obtain the point symmetries of the equation. The group classification occurs in that phase. The determining equations contain also the parameters $\sigma,\r$ and the function $f(u)$. The group classification is performed by investigating each case where specific relations among the unknown elements remove equations from the set of determining equations and hence, expanding the solution space. 

This set of symmetries, represented as differential operators, form a Lie algebra. The system of ODEs \eqref{infinitesimals} with the addition of the conditions $\bar x\rvert_{\eps=0}=x,\ \bar t\rvert_{\eps=0}=t,\ \bar u\rvert_{\eps=0}=u$ forms a well posed initial value problem. By solving it, we can obtain the corresponding local continuous transformations, which form a Lie group. This process is called \emph{exponentiation}. Henceforth, we shall identify a Lie point symmetry with its infinitesimal generator.

Having the symmetries, as a Lie algebra, there is a wealth of things that can be done with. In the present paper, we use them to obtain \emph{invariant} or \textit{similarity solutions} of the Eq.~\eqref{u01}. By invariant solutions we mean solutions of \eqref{u01} that are invariant under one of the found symmetries $\mathfrak{X}$, e.g. 
\begin{equation}\label{isc}
	\left.\mathfrak{X}[u-\varphi (x,t)]\right\rvert_{u=\varphi (x,t)}\equiv0.
\end{equation}
The Eq.~\eqref{isc} is a linear PDE  called \textit{invariant surface condition} and by solving it we obtain a way to reduce Eq.~\eqref{u01}. For example, the symmetry $\pd t$ yields the invariant surface condition $u_t=0$. Solving it, we get the invariant solution $u(x,t)=\phi(x)$ which, in turn, can be used to reduce the order of Eq.~\eqref{u01} effectively turning it from a PDE to an ODE. Similarly, when we look for a similarity solution of Eq.~\eqref{u01} along with a initial/boundary condition we have to choose the subalgebra  leaving also invariant that condition and its boundary:
\begin{equation}\label{tca}
	\left.X(t-T)\right\rvert_{t=T}\equiv0
\end{equation}
and
\begin{equation}\label{tcb}
	\left.X(u-1)\right\rvert_{t=T}\equiv0.
\end{equation}
for the boundary condition \eqref{tc}. And
\begin{equation}\label{boa}
	\left.X(x-H(t))\right\rvert_{x=H(t)}\equiv0
\end{equation}
and
\begin{equation}\label{bob}
	\left.X(u-R(t))\right\rvert_{x=H(t)}\equiv0.
\end{equation}
for the boundary condition \eqref{barrierCond1}.

For obtaining the equivalence transformations there two possible roads. The first one is  by a direct search for the equivalence transformations, an approach that gives in theory the most general equivalence set of transformation, the equivalence group. But it has the same pitfalls as trying to obtain the symmetry group for the equation as briefly discussed already. And the second approach is  through the calculation of the \emph{equivalence algebra} from which the continuous equivalence group can be obtained. In the present work the second road will be realized by complementing  the usual prolongation of the infinitesimal generator with a \emph{secondary prolongation}, \cite{Ibr85}.

To calculate the equivalence algebra, an extension of Eq.~\eqref{u01} must be considered with the arbitrary elements $\sigma, r, f$, now functions of $x,t,u$, and by including the following constraints on them,
$$	\sigma_x=\sigma_t=\sigma_u=r_x=r_t=r_u=f_x=f_t=0.$$
For this extended system the infinitesimal generator is 
\begin{equation} \label{eig}
	X={\xi }^1\frac{\kd }{\kd x}+{\xi }^2\frac{\kd }{\kd t}+ \eta  \frac{\kd }{\kd u}+ \phi^1  \frac{\kd }{\kd \sigma}+ \phi^2  \frac{\kd }{\kd r}+ \phi^3  \frac{\kd }{\kd f},
\end{equation}
 where now the coefficients of this operator depend on the extended space: ${\xi }^i={\xi}^i(x,t,u,\sigma,r,f)$, $\eta=\eta(x,t,u,\sigma,r,f)$ and  $\phi^i=\phi^i(x,t,u,\sigma,r,f)$. 
The second prolongation needed to obtain the determining equation now becomes
\begin{multline}\label{prolong2}
	X^{(2)}= X + {\eta }^{(1)}_{i} \frac{\kd }{\kd u_i}+ {\eta }^{(2) }_{i_1 i_2} \frac{\kd }{\kd u_{i_1 i_2}}+ {\phi}^{1,(1)}_{j} \frac{\kd }{\kd \sigma_j}+ {\phi}^{2,(1)}_{j} \frac{\kd }{\kd r_j}\\
	+ {\phi}^{3,(1)}_{j} \frac{\kd }{\kd f_j},\ i,i_k=1,2,\ j=1,2,3
\end{multline}
where the coefficients $ {\eta }^{(1)}_{i}, {\eta }^{(2) }_{i_1 i_2}$ are calculated as usual by the formula \eqref{prolongedCoefficients}  while for the coefficients ${\phi}^{i,(1)}_{j} $ with the secondary prolongation,
$$
	{\phi }^{i, (1)}_{j}=\tilde D_j\phi^i - (\tilde D_j{\xi }^1)p^i_x- (\tilde D_j{\xi }^2)p^i_t- (\tilde D_j\eta )p^i_u,\ , i,j=1,2,3
$$
where $(p^1,p^2,p^3 )=(\sigma,r,f)$, $(x^1,x^2,x^3)=(x,t,u)$ and
$$
	\tilde D_j = \frac{\kd}{\kd x^j}+ p^i_{x^j} \frac{\kd }{\kd p^i}, 
$$
again the Einstein summation convention is used for the index $i$. After that point we follow Lie's algorithm as usual.

Having the equivalence algebra by exponentiation one can obtain the continuous part of the equivalence group. By using the method proposed in \cite[pp. 187 c.f.]{Hydon2k}, \cite{Hy2k} one can also obtain the discrete part and hence retrieve the whole set of equivalence transformations permissible by this class of equations. 

Another useful notion is that of the \emph{additional equivalence transformation}. An additional equivalence transformation is a point transformation that connects inequivalent classes of differential equations. The knowledge of such transformations greatly helps the classification.

\section{Group classification}

In this section we proceed with the group classification of the generalized Black--Scholes--Merton equation \eqref{u01}. 

First, the best representative for the class of equations \eqref{u01} is obtained utilizing its equivalence algebra. To do that, the continuous part of the equivalence group is constructed and with its help as many as possible arbitrary elements are zeroed.
\begin{thm}
The equivalence algebra $\hat{\mathcal{L}}_\mathcal{E}$ of  class \eqref{u01} is generated by the following vector fields
\begin{gather*}
	\partial _t,\ \partial _u,\ x\partial _x,\ \partial _r+t x\partial _x,\ f\partial _f+u\partial _u,\\
 	x \left(2 r t-t \sigma ^2+2 \log \lvert x\rvert\right)\partial _x+4 t\partial _t-4 f\partial _f, \\
 	x \left(\left(2 r+\sigma ^2\right)t-2 \log\lvert x\rvert\right)\partial _x -2 \sigma \partial _{\sigma }.
\end{gather*}
\end{thm}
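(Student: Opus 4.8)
The plan is to carry out the secondary-prolongation algorithm set up in Section~2. First I extend Eq.~\eqref{u01} by regarding $\sigma$, $r$ and $f$ as dependent variables on $(x,t,u)$, subject to the auxiliary constraints $\sigma_x=\sigma_t=\sigma_u=r_x=r_t=r_u=f_x=f_t=0$, and I take the generator in the form \eqref{eig} with the combined prolongation \eqref{prolong2}. The first batch of determining equations comes from demanding that the prolongations of the auxiliary constraints vanish. On the constraint manifold the reduced total derivatives collapse to $\tilde D_1=\partial_x$, $\tilde D_2=\partial_t$, $\tilde D_3=\partial_u+f_u\,\partial_f$, and one has $D_x\sigma=D_x r=0$ but $D_x f=u_x f_u$, and similarly for $D_t$; splitting off the arbitrary coefficient $f_u$ then forces $\xi^1$, $\xi^2$ and $\eta$ to be free of $f$, forces $\eta$ to be free of $\sigma$ and $r$, and forces $\phi^1$, $\phi^2$ to depend on $\sigma$, $r$ only. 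Note that, in contrast, $\xi^1$ and $\xi^2$ are still allowed to depend on $\sigma$ and $r$ — this is exactly why the last two listed generators carry explicit $\sigma$- and $r$-factors.

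Next I substitute the prolonged coefficients $\eta^{(1)}_1$, $\eta^{(1)}_2$, $\eta^{(2)}_{11}$ obtained from \eqref{prolongedCoefficients} into the linearized symmetry condition \eqref{lsc} for the extended equation, eliminate $u_t$ by means of \eqref{u01}, and split the resulting polynomial identity in the jet variables $u_x$ and $u_{xx}$. The coefficient of $u_x^2$ gives $\eta_{uu}=0$, so $\eta$ is affine in $u$; the coefficient of $u_{xx}$ expresses $\phi^1$ through $\xi^1$, $\xi^2$ and $\sigma$ (schematically $\phi^1=\sigma\bigl(\partial_x\xi^1-\xi^1/x-\tfrac12\partial_t\xi^2\bigr)$); the coefficient of $u_x$ expresses $\phi^2$ through $\xi^1$, $\xi^2$, $\eta$, $\sigma$, $r$; and the $u_x$- and $u_{xx}$-free part, split further by powers of the coordinate $f$, expresses $\phi^3$ through the remaining data and forces the $u$-independent part of $\eta$ to satisfy the homogeneous linear equation obtained from \eqref{u02} by dropping the $-ru$ term.

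It then remains to impose the consistency requirements already flagged in the first step: that $\phi^1$, $\phi^2$ really be functions of $\sigma$, $r$ alone, and that $\phi^3$ be compatible with $f_x=f_t=0$, i.e. $\phi^{3,(1)}_1=\phi^{3,(1)}_2=0$; the latter is precisely what kills the would-be $x,t$-dependent summand of $\eta$ (a non-constant one would send $f(u)$ to a function of $\bar u,\bar x,\bar t$, leaving the class~\eqref{u01}). Together these turn the formulas for $\phi^1$, $\phi^2$, $\phi^3$ into an overdetermined \emph{linear} system of PDEs for $\xi^1(x,t,\sigma,r)$ and $\xi^2(t)$ and for the remaining constants. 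Writing $\xi^1=x\,w$, the equation coming from $\phi^1$ reads $x\,\partial_x w=h(t,\sigma,r)$, whose solutions carry a $\log\lvert x\rvert$ term — the reflection of the classical substitution $x=e^{y}$ reducing \eqref{u02} to the heat equation; the remaining equations then pin $w$ and $\xi^2$ down to the specific combinations in the statement and leave a seven-parameter family. Collecting the seven free constants with their coefficient vector fields gives exactly the generators listed, and a routine bracket check (carried out with SYM) confirms they close into a Lie algebra, so they span $\hat{\mathcal L}_{\mathcal E}$.

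The bulk of the work, and the main source of error, will be pure bookkeeping: the combined prolongation \eqref{prolong2} produces a large determining system in which the genuine jet monomials in $u_x,u_{xx}$ must be kept strictly separate from the arbitrary-element monomials in $f_u$ (coming from terms such as $D_x f=u_x f_u$) and from powers of the coordinate $f$ itself — conflating these would spuriously over- or under-constrain $\eta$ and $\phi^3$. The only genuinely non-mechanical point is integrating the $\xi^1$-equation so as not to miss the $\log\lvert x\rvert$-dependent solutions; everything past that is linear algebra over smooth functions and is best delegated to a symbolic computation.
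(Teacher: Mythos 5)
Your proposal follows essentially the same route as the paper: extend Eq.~\eqref{u01} by the auxiliary constraints on $\sigma,r,f$, apply the secondary prolongation \eqref{prolong2} of the generator \eqref{eig}, split the resulting determining system in the jet and arbitrary-element coordinates, and solve the overdetermined linear system to obtain the seven generators --- which is precisely what the paper does (it lists the raw determining equations and solves them with SYM). The only deviations are minor bookkeeping attributions that do not affect the outcome: the $f$-independence of $\xi^1,\xi^2,\eta$ actually arises from the $f_u$-splitting of the main invariance condition rather than from the prolonged auxiliary constraints, and $\eta_{uu}=0$ follows only after combining the $u_x^2$ coefficient with the equations forcing $\xi^2_u=\xi^2_{xu}=\xi^1_u=0$.
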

\begin{proof}
By applying the second order prolongation \eqref{prolong2} of the infinitesimal generator \eqref{eig} to the extended system 
\begin{gather*}
	u_t+\frac{1}{2} {\sigma }(x,t,u)^2 x^2 u_{xx}+ r(x,t,u) x u_x +f(x,t,u)=0,\\
	\sigma_x=\sigma_t=\sigma_u=r_x=r_t=r_u=f_x=f_t=0,
\end{gather*}
modulo the extended system itself, we get the system of determining equations:
\begin{gather*}
{\eta_3}_{f}=0,\ {\eta_4}_{f}=0,\ \xi ^2_{f}=0,\ \xi ^2_{f}=0,\ \xi ^2_{f}=0,\  \xi ^2_{ff}=0,\  {\eta_3}_{u}=0,\  {\eta_4}_{u}=0,\\
\xi ^2_{u}=0,\ \xi ^2_{uf}=0,\  \xi ^2_{uu}=0,\  {\eta_1}_{t}=0,\  {\eta_2}_{t}=0,\  {\eta_3}_{t}=0,\  {\eta_4}_{t}=0,\  {\eta_1}_{x}=0,\\  
 {\eta_2}_{x}=0,\  {\eta_3}_{x}=0,\ {\eta_4}_{x}=0,\  \xi ^2_{x}=0,\ {\eta_1}_{f}+f \xi ^2_{f}=0,\ {\eta_1}_{f}+f \xi ^2_{f}=0,\\ 
 \xi ^1_{f}-r x \xi ^2_{f}=0,\  \xi ^1_{ff}-r x \xi ^2_{ff}=0,\ \xi ^1_{uf}-r x \xi ^2_{uf}=0,\  \xi ^1_{uu}-r x \xi ^2_{uu}=0,\\
2 \xi ^2_{f}+{\eta_1}_{ff}+f \xi ^2_{ff}=0,\ 2 \xi ^1_{f}-x \left(2 \left(r+\sigma ^2\right) \xi ^2_{f}+x \sigma ^2 \xi ^2_{xf}\right)=0,\\
2 \xi ^1_{u}-x \left(2 \left(r+\sigma ^2\right) \xi ^2_{u}+x \sigma ^2 \xi ^2_{xu}\right)=0,\\
2 r \xi ^2_{u}+{\eta_1}_{uu}+f \xi ^2_{uu}-2 \xi ^1_{xu}+2 r x \xi ^2_{xu}=0, \\
r \xi ^2_{f}+\xi ^2_{u}+{\eta_1}_{uf}+f \xi ^2_{uf}-\xi ^1_{xf}+r x \xi ^2_{xf}=0,\\ 
f \xi ^1_{f}+x \left(-f r \xi ^2_{f}+x \sigma ^2 \left(\xi ^2_{x}+{\eta_1}_{xf}+f \xi ^2_{xf}\right)\right)=0,\\
  4 x {\eta_3}+\sigma  \left(4 \xi ^1+x \left(-2 f \xi ^2_{u}+2 \xi ^2_{t}-4 \xi ^1_{x}+6 r x \xi ^2_{x}+4 x \sigma ^2 \xi ^2_{x}+x^2 \sigma ^2 \xi ^2_{xx}\right)\right)=0, \\
\begin{split}
	2 {\eta_2}-2 f {\eta_1}_{u}-2 f^2 \xi ^2_{u}+2 {\eta_1}_{t}+2 f \xi ^2_{t}+2 r x {\eta_1}_{x}+2 f r x \xi ^2_{x}\\
	+\sigma ^2  x^2( {\eta_1}_{xx}+f \xi ^2_{xx})=0
\end{split}\\
\begin{split}
 	2 x {\eta_4}+2 r \xi ^1+2 f \xi ^1_{u}-2 f r x \xi ^2_{u}-2 \xi ^1_{t}+2 r x \xi ^2_{t}-2 r x \xi ^1_{x}+2 r^2 x^2 \xi ^2_{x}+2 r x^2 \sigma ^2 \xi ^2_{x}\\
 		+2 x^2 \sigma ^2 {\eta_1}_{xu}+2 f x^2 \sigma ^2 \xi ^2_{xu}-x^2 \sigma ^2 \xi ^1_{xx}+r x^3 \sigma ^2 \xi ^2_{xx}=0.
\end{split}
\end{gather*}
Solving the above system the equivalence algebra $\hat{\mathcal{L}}_\mathcal{E}$ is obtained.
\end{proof}
\begin{lem}
	The continuous part of the equivalence group, $\hat{\mathcal{E}}_\mathcal{C}$, consists of the transformations
	\begin{align*}
		\tilde x &= e^{\frac{1}{2} t \delta _6 \left(\left(\sigma ^2-2 r\right) \delta _7-\sigma ^2 \delta _7^2 \delta _6+2 \delta _6 (r+\delta_5)\right)} \lvert x\rvert^{\delta _6 \delta _7} \delta _4,\\
		\tilde t &= \delta _1+ \delta _6^2t,\quad\tilde u = \delta _2+ \delta _3u,\quad\tilde r = r+\delta _5,\quad\tilde\sigma =  \delta _7\sigma ,\quad\tilde f = \frac{ \delta _3}{\delta _6^2}f,
	\end{align*}
	where $\delta_i$ are arbitrary constants and $\delta_3,\delta_4,\delta_6,\delta_7\ne0$.
\end{lem}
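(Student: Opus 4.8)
The plan is to obtain $\hat{\mathcal{E}}_\mathcal{C}$ by exponentiating the equivalence algebra $\hat{\mathcal{L}}_\mathcal{E}$ furnished by the preceding theorem. First I would name the seven generators, say $Y_1=\partial_t$, $Y_2=\partial_u$, $Y_3=x\partial_x$, $Y_4=\partial_r+tx\partial_x$, $Y_5=f\partial_f+u\partial_u$, $Y_6=x(2rt-t\sigma^2+2\log\lvert x\rvert)\partial_x+4t\partial_t-4f\partial_f$, and $Y_7=x((2r+\sigma^2)t-2\log\lvert x\rvert)\partial_x-2\sigma\partial_\sigma$, and compute for each the flow $\exp(\epsilon_i Y_i)$ by integrating the characteristic system \eqref{infinitesimals} (here with the extended dependent tuple $(\sigma,r,f)$ adjoined) subject to the identity initial data. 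The flows of $Y_1,\dots,Y_5$ are elementary: $Y_1$ shifts $t$, $Y_2$ shifts $u$, $Y_3$ scales $x$, $Y_5$ scales $(u,f)$ jointly, and $Y_4$ shifts $r$ while dragging $x$ by the $t$-dependent factor coming from $\dot x = tx$. The parameters $\delta_1,\delta_2,\delta_5$ are the additive constants from $Y_1,Y_2,Y_4$; $\delta_3$ is the multiplicative constant from $Y_2$-scaling built into $Y_5$ (hence $\tilde u=\delta_2+\delta_3 u$, $\tilde f=\delta_3 f/\delta_6^2$ after combining with $Y_6$); $\delta_4$ collects the $x$-scaling; $\delta_6^2$ is the $t$-scaling carried by $Y_6$; and $\delta_7$ is the $\sigma$-scaling carried by $Y_7$.

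The substantive computation is the pair $Y_6,Y_7$, whose $\partial_x$-components involve $\log\lvert x\rvert$ and the current values of $r$ and $\sigma$. For $Y_6$ one solves $\dot t=4t$, $\dot f=-4f$, and $\dot x = x(2rt-t\sigma^2+2\log\lvert x\rvert)$ with $r,\sigma$ held fixed along this flow; writing $w=\log\lvert x\rvert$ turns the $x$-equation into the \emph{linear} ODE $\dot w = 2w + (2r-\sigma^2)t$, which integrates in closed form and, after re-exponentiating, produces the exponential prefactor with $(\sigma^2-2r)$ inside. The $Y_7$ flow is handled the same way: $\dot\sigma=-2\sigma$ gives $\sigma\mapsto\delta_7\sigma$, and $\dot w = -2w + (2r+\sigma^2)t$ (again $r$ fixed, $\sigma$ evolving, so one must keep track of $\sigma(\epsilon)=e^{-2\epsilon}\sigma$ inside the inhomogeneity) integrates to the remaining piece of the prefactor. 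Composing the $Y_6$ and $Y_7$ flows — the order matters only up to reparametrising constants — and collecting every contribution to $\log\tilde x$ yields exactly the stated exponent $\tfrac12 t\delta_6\bigl((\sigma^2-2r)\delta_7-\sigma^2\delta_7^2\delta_6+2\delta_6(r+\delta_5)\bigr)$ together with the factor $\lvert x\rvert^{\delta_6\delta_7}$.

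I would organise the write-up as: (i) record the five easy flows and the two nontrivial ones; (ii) note that since these generators span $\hat{\mathcal{L}}_\mathcal{E}$, an arbitrary element of the connected equivalence group is a finite product of such flows, and by the group property it suffices to take one factor of each; (iii) compose them in a fixed order and read off $(\tilde x,\tilde t,\tilde u,\tilde r,\tilde\sigma,\tilde f)$, absorbing redundant products of parameters into the seven free constants $\delta_i$; (iv) check the nondegeneracy constraints $\delta_3,\delta_4,\delta_6,\delta_7\ne0$ (these are forced because the corresponding flows are scalings, whose exponentials never vanish, and because the transformation must be invertible). The main obstacle is purely computational: getting the $Y_6$–$Y_7$ composition right, i.e. carefully tracking how $r$ and $\sigma$ change under each flow so that the inhomogeneous terms in the linear ODEs for $\log\lvert x\rvert$ are the correct ones, and then verifying that the two exponential contributions combine into the single quadratic-in-$(\delta_5,\delta_6,\delta_7)$ expression displayed in the statement. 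Once that algebraic identity is confirmed — most safely by differentiating the claimed $\tilde x$ with respect to each $\epsilon_i$ at $\epsilon=0$ and matching against $Y_i$ — the lemma follows.
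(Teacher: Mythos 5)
Your proposal is correct and follows exactly the route the paper intends: the paper states this lemma without a written proof, relying on the procedure described in its preliminaries (exponentiate the equivalence algebra $\hat{\mathcal{L}}_\mathcal{E}$ and compose the one-parameter flows), which is precisely your plan, and your treatment of the only delicate flows $Y_6,Y_7$ (linearising in $w=\log\lvert x\rvert$ while tracking the evolving $t$, respectively $\sigma$, in the inhomogeneous term) reproduces the stated exponent and the constraints $\delta_3,\delta_4,\delta_6,\delta_7\ne0$.
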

\begin{rem}
	Due to the fact that the transformation for $x$ depends also on the arbitrary elements of Eq.~\eqref{u01},  $\hat{\mathcal{E}}_\mathcal{C}$ is also called the continuous part of the \emph{generalized equivalence group}. If one has chosen to assume that the equivalence transformations for $x,t,u$ do not depend also on the arbitrary elements $r,\sigma,f$, i.e. $\xi^i = \xi^i(x,t,u),\ \eta=\eta(x,t,u)$ in \eqref{eig}, then  the continuous part of the \emph{usual equivalence group} $\mathcal{E}_\mathcal{C}$ would be obtained. 
\end{rem}
Readily,  using $\hat{\mathcal{E}}_\mathcal{C}$ one can find an equivalence transformation that $\tilde\sigma\rightarrow\sqrt{2}$ and $\tilde r\rightarrow0$:
\begin{equation}\label{equivTrans}
		\tilde x = e^{\left(\frac{\sigma ^2-2 r}{\sqrt{2} \sigma }-1\right)t} \lvert x\rvert^{\frac{\sqrt{2}}{\sigma }},\quad\tilde t = t,\quad\tilde u = u,\quad\tilde r = 0,\quad\tilde\sigma = \sqrt2,\quad\tilde f = f.
\end{equation}
Using \eqref{equivTrans},	Eq.~\eqref{u01} turns into the equation
\begin{equation}\label{u01a}
	\tilde u_{\tilde t}+ \tilde x^2 \tilde u_{\tilde x\tilde x} +\tilde f(\tilde u)=0.
\end{equation}
Next, using the additional equivalence transformation 
\begin{equation}
	\hat x = \log\lvert \tilde x\rvert,\quad \hat t=t,\quad\hat u = \lvert\tilde x\rvert^{-1/2} \tilde u,
\end{equation}
the heat equation with a nonlinear source is obtained (for clarity henceforth the hats are dropped)
\begin{equation}\label{heat}
	 u_{ t}+ u_{ x x}+e^{- x/2}f(e^{ x/2} u)-\frac{1}{4} u=0.
\end{equation}
Therefore, without any loss of generality, Eq.~\eqref{heat} will be classified instead. In addition, the terminal condition \eqref{tc} is transformed to the condition
\begin{equation}\label{tc2}
	u(x,T) = \exp(-x/2)
\end{equation}
and for the barrier option condition \eqref{barrierCond1} we have:
\begin{equation}\label{barrierCond1a}
	e^{\frac{1}{2}\left(\frac{\sigma^2-2r}{\sqrt{2}\sigma}t-1\right)}\lvert H(t)\rvert^\frac{\sqrt{2}}{2\sigma}u\left(\left(\frac{\sigma^2-2r}{\sqrt{2}\sigma}-1 \right)t+\frac{\log\lvert H(t) \rvert}{\sqrt{2}\sigma},t\right)=R(t)
\end{equation}
Finally, the equivalence group for Eq.~\eqref{heat} is given by the following theorem
\begin{thm}
	The  equivalence group, $\mathcal{E}$, of Eq.~\eqref{heat} consists of the transformations
	\begin{align*}
		\tilde x &=\delta_5\beta x + (\beta-\delta_5)\delta_5 t + 2 \delta_4,\quad\tilde t = \delta _5 t+\delta _1,\\
		\tilde u &=e^{-\frac{1}{2}\left(\delta_5 x-(\delta_5-1)\delta_5t+2\delta _4\right)}\left(\alpha \delta_3 e^{\frac{x-(\beta-1)(x+t)}{2}} u+\delta_2 \right),\quad\tilde f =\alpha \frac{ \delta _3}{\delta _5^2}f,
	\end{align*}
	where $\delta_i$ are arbitrary constants, $\delta_3,\delta_5\ne0$ and $\alpha,\beta=\pm1$.
\end{thm}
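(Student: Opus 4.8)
The plan is to compute the equivalence algebra of Eq.~\eqref{heat} directly, exponentiate it to obtain the continuous part of $\mathcal{E}$, and then adjoin the discrete transformations, mirroring the method already used for Theorem~1.1 and Lemma~1.2. Concretely, I would regard $f$ as an arbitrary element with $f_x=f_t=0$ (the coefficient $e^{-x/2}f(e^{x/2}u)-\tfrac14 u$ built into \eqref{heat} is not arbitrary, so only $f$ needs to be prolonged), write the extended infinitesimal generator $X=\xi^1\partial_x+\xi^2\partial_t+\eta\partial_u+\phi\partial_f$ with $\xi^1,\xi^2,\eta$ possibly depending on $x,t,u,f$ and $\phi=\phi(x,t,u,f)$, apply the second prolongation \eqref{prolong2} (with the secondary prolongation for $\phi$) to \eqref{heat}, and split the resulting linearized symmetry condition on the parametric derivatives $u_x,u_{xx}$ and on $f$ to obtain the determining system.

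The second step is to solve that determining system. I expect it to force $\xi^2=\xi^2(t)$ affine in $t$, i.e. $\xi^2=\delta_5 t+\delta_1$, and $\xi^1$ linear in $x$ with a $t$-dependent shift, producing the two constants $\delta_4$ and $\delta_5$ together with the reflection parameter $\beta$; the $\eta$ equation will be linear in $u$ because \eqref{heat} is semilinear in $u$ only through $f$, giving the exponential prefactors $e^{-\frac12(\delta_5 x-(\delta_5-1)\delta_5 t+2\delta_4)}$ and $e^{\frac{x-(\beta-1)(x+t)}{2}}$ and the inhomogeneous term $\delta_2$, while the $\phi$ equation will decouple as $\tilde f=\alpha\delta_3\delta_5^{-2}f$. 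Then I would integrate the Lie equations \eqref{infinitesimals} for each generator (equivalently exponentiate the algebra), assembling the one-parameter flows into the displayed composite transformation with arbitrary constants $\delta_i$ and $\delta_3,\delta_5\neq0$.

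Finally, to get the full group rather than just its identity component, I would apply the Hydon-type procedure referenced in the text (\cite{Hydon2k,Hy2k}): one looks for point transformations normalizing the equivalence algebra, which throws up the two independent sign choices $\alpha,\beta=\pm1$ — the reflection $x\mapsto -x$ combined with the appropriate compensating change in $u$, and the sign flip in the scaling of $f$ and $u$ — and one checks these indeed leave the class invariant. The main obstacle will be the bookkeeping in the second step: the determining system for \eqref{heat} is genuinely messy because the transplanted nonlinearity $e^{-x/2}f(e^{x/2}u)$ couples $\xi^1$ and $\eta$ through the exponential weights, so verifying that the general solution is exactly the seven-parameter family displayed (and that no further arbitrary functions of $t$ survive, as they would for the pure heat equation) requires careful case analysis on whether $\delta_5=1$ or $\beta=1$. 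I would lean on the symbolic package SYM to generate and reduce the determining equations and then verify the closed-form solution by back-substitution.
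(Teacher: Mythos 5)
Your proposal follows essentially the same route as the paper: the authors likewise compute the equivalence algebra of Eq.~\eqref{heat} by the prolongation/secondary-prolongation procedure already used for Eq.~\eqref{u01} (now with $f$ as the sole arbitrary element), exponentiate to get the continuous part, and then adjoin the discrete transformations obtained by the method of \cite{Hy2k}, exactly the $\alpha,\beta=\pm1$ reflections you describe. Your plan, including the back-substitution check and the care needed with the weight $e^{-x/2}f(e^{x/2}u)$, is a sound and faithful reconstruction of that argument.
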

\begin{proof}
	The process is analogous to the one for Eq.~\eqref{u01} with the only difference that now we have only one arbitrary element, the function $f$. In addition, using the process described in \cite{Hy2k} we find the four discrete equivalence transformations
	$$
		(x,t,u,f)\rightarrow(\beta x+(\beta-1)t,t,\alpha e^{-\frac{1}{2}(\beta-1)(x+t)} u,\alpha f),
	$$
	where $\alpha,\beta=\pm1$.
Together the two sets of transformations comprise the usual equivalence group of transformations $\mathcal{E}$.
\end{proof}

Equation \eqref{heat} belongs to the class
\begin{equation*}
	u_t = u_{xx} + f(t,x,u,u_x)
\end{equation*}
that describes nonlinear heat conductivity processes. Due to the fact that the above class was completely classified in \cite{ZhdaLa99}. Apart from mentioning the classification equation
\begin{multline*}
	\frac{1}{2} \left(u f^\prime\left(e^{x/2} u\right)-e^{-x/2} f\left(e^{x/2} u\right)\right) \left(\mathcal{F}_3(t)+\frac{x \mathcal{F}_2^\prime(t)}{2}\right)+u \mathcal{F}^\prime_4(t)\\
	+\left(e^{-x/2} f\left(e^{x/2} u\right)-\frac{u}{4}\right) \left(\mathcal{F}^\prime_2(t)-\mathcal{F}_4(t)-\frac{1}{8} x \left(4 \mathcal{F}^\prime_3(t)+x \mathcal{F}_2^{\prime \prime}(t)\right)\right)\\
	+\left(f^\prime\left(e^{x/2} u\right)-\frac{1}{4}\right) \left(\mathcal{F}_1(x,t)+\frac{1}{8} u \left(8 \mathcal{F}_4(t)+x \left(4 \mathcal{F}_3^\prime(t)+x \mathcal{F}_2{}^{\prime \prime }(t)\right)\right)\right)\\
	+\frac{1}{8} u \left(2 \mathcal{F}_2^{\prime \prime }(t)+x \left(4 \mathcal{F}_3^{\prime \prime }(t)+x \mathcal{F}_2^{\prime \prime \prime}(t)\right)\right)+{\mathcal{F}_1}_{t}(x,t)+{\mathcal{F}_1}_{xx}(x,t)=0
\end{multline*}
no further details of the calculations involved will be showed. We proceed with presenting the resulting classification.

\begin{enumerate}
	\item For an arbitrary $f$ the Lie point symmetries of (\ref{heat}) are determined by the  infinitesimal generators:
		\begin{align}\label{u03}
			\mathfrak{X}_1&=\frac{\p}{\p t},& \mathfrak{X}_2&=2 \frac{\p}{\p x}-u\frac{\p}{\p u}. 
		\end{align}
	\item For $f(\zeta) =-\frac{\gamma}{\beta}(\alpha +\beta \zeta)\left(\delta +\log\lvert\alpha+\beta\zeta\rvert\right),\,\be,\gamma\neq 0$,  in addition to the symmetries \eqref{u03} we get  also the symmetries		
		\begin{subequations}\label{u03a}
		\begin{align}
			\mathfrak{X}_3 &= \frac{\alpha+\beta e^{x/2}u}{\beta}e^{-\frac{x}{2}+\gamma t}\frac{\p}{\p u},\\
			\mathfrak{X}_4 &=  \frac{2}{\gamma}e^{\gamma t}\frac{\p}{\p x} +  \frac{\beta  \gamma (t+x)e^{x/2}u + \alpha  (1+\gamma (t+x)  )}{\beta\gamma}e^{-\frac{x}{2}+\gamma t}\frac{\p}{\p u}.			
		\end{align}
		\end{subequations}
	\item For $f(\zeta)=\alpha(\zeta-\beta)^2,\,\alpha\ne0$, in addition to the symmetries \eqref{u03} we have the symmetry
		\begin{align}\label{u03b}
			X_3&=2(x-t)\pd{x}+4t\pd{t}+((t-x-4)u+4\beta e^{-x/2})\pd{u}.
		\end{align}
	\item For completeness, we present also the additional symmetries for the linear cases:  
		\begin{itemize}
			\item $f(\zeta)=\beta \zeta+\alpha,\, \beta\ne0$:
				\begin{align*}
					X_3&=(u+\frac{\alpha}{\beta}e^{-x/2})\pd u,\\ 
					X_4&=2t\pd x+\left(u x+\frac{ \alpha(t+x)e^{-x/2}  }{\beta }\right)\pd u,\\
					X_5&=2x\pd x+4t\pd t+\frac{\left(\alpha x- (4 \beta -1) \left(\alpha +\beta e^{x/2} u  \right)t\right)}{\beta }e^{-x/2} \pd u,\\
					X_6&=4xt\pd x+4t^2\pd t+\frac{1}{\beta}\left(\alpha  \left(2 (x-1)t+x^2+(1-4 \beta )t^2 \right)e^{-x/2}\right.\\
					&\left.+\beta  \left(x^2+(t-4\beta t  -2)\right)tu\right)\pd u,\\
					X_\infty&= \mathcal{F}(x,t)\pd u,
				\end{align*}
				where the smooth function $ \mathcal{F} =\mathcal{F}(x,t)$  satisfies the linear equation $$\mathcal{F}_{t}+ \mathcal{F}_{xx}+(\alpha-\frac{1}{4})\mathcal{F}=0$$
			and,
			\item $f(\zeta)=\alpha$:
				\begin{align*}
					X_3&=(u+\alpha x e^{-x/2})\pd u,\\ 
					X_4&=4t\pd x+\left(2 e^{x/2} x u+\alpha\left(t^2-x (x+2)\right)  \right)e^{-x/2} \pd u,\\
					X_5&=4x\pd x+8t\pd t+  \left(2 e^{x/2} t u+\alpha\left(t^2-(x-6) x\right)  \right)e^{-x/2}\pd u,\\
					X_6&=12xt\pd x+12t^2\pd t+\left(3 \left((t-2) t+x^2\right)u\right.\\
					&\left.+\alpha \left(t^3-12 t^2-3 t x^2-2 x (x (x+3)+6)\right)e^{-x/2} \right)\pd u,\\
					X_\infty&= \mathcal{F}(x,t)\pd u,
				\end{align*}
		\end{itemize}	
		where the smooth function $ \mathcal{F} =\mathcal{F}(x,t)$  satisfies the linear equation $$\mathcal{F}_{t}+ \mathcal{F}_{xx}+(\alpha-\frac{1}{4})\mathcal{F}=0.$$
\end{enumerate} 
\begin{rem}
Looking at the corresponding Lie algebars for the two linear cases, it is evident  that they are linked to the heat equation via a point transformation, \cite{DiAndTsLe2k9},  --- an additional equivalence transformation --- a fact well established in the literature, \cite{GazIbr98,SinLeaHara2k8}.
\end{rem}

\section{Invariant solutions}

Having obtained the complete group classification for Eq.~\eqref{heat}, and consequently for Eq.~\eqref{u01}, we can look for invariant solutions under the terminal condition \eqref{tc2} and the barrier option \eqref{barrierCond1a}: For each one of the two nonlinear cases the appropriate sub algebra of symmetries also admitted by each problem is found using the two required conditions  \eqref{tca}, \eqref{tcb} and \eqref{boa}, \eqref{bob} adapted now to Eq.~\eqref{heat}. Finally, by using the subalgebra obtained for every subcase that surfaced from the two conditions a similarity solution is constructed.

\subsection{The terminal condition}

\begin{case}$f(\zeta)=\alpha(\zeta-\beta)^2,\,\alpha\ne0$
\end{case}
Let the arbitrary element of the Lie algebra spanned by \eqref{u03} and \eqref{u03b} be $\mathfrak X = c_1 \mathfrak X_1+c_2 \mathfrak X_2+c_3 \mathfrak X_3$. Using \eqref{tca}, \eqref{tcb} and \eqref{tc2} we obtain the conditions: 
$$
	c_1=-4Tc_3
$$
and
$$
	(\beta-1)c_3=0.
$$
From the above conditions two specific subcases  occur:
\begin{subcase}
	$c_1=c_3=0,\ c_2\ne0$
\end{subcase}
For this case the only symmetry that keeps invariant both Eq.~\eqref{heat} and \eqref{tc2} is the $ \mathfrak{X}_2=2 \partial_x-u\partial_u$.
\begin{subcase}
	$\beta=1,\ c_1=-4Tc_3$
\end{subcase}
For this case the only symmetries that keeps invariant both Eq.~\eqref{heat} and \eqref{tc2} are 
\begin{align*}
	Z_1&=2 \partial_x-u\partial_u,\\
	Z_2&=2(x-t)\partial_x+4(t-T)\partial_t+((t-x-4)u+4 e^{-x/2})\partial_u
\end{align*}
\begin{case} $f(\zeta) =-\frac{\gamma}{\beta}(\alpha +\beta \zeta)\left(\delta +\log\lvert\alpha+\beta\zeta\rvert\right),\,\be,\gamma\neq 0$
\end{case}
Let the arbitrary element of the Lie algebra spanned by \eqref{u03} and \eqref{u03a} be $\mathfrak X = c_1 \mathfrak X_1+c_2 \mathfrak X_2+c_3 \mathfrak X_3+c_4\mathfrak X_4$. Using \eqref{tca}, \eqref{tcb} and \eqref{tc2} the conditions are 
$$
	c_1=0
$$
and
$$
	(\alpha +\beta) \left(c_4+\gamma  \left(c_3+(t+x)c_4\right)\right)=0.
$$
From the above conditions two specific subcases  occur:
\begin{subcase}
	$c_1=c_3=c_4=0,\ c_2\ne0$
\end{subcase}
For this case the only symmetry that keeps invariant both Eq.~\eqref{heat} and \eqref{tc2} is the $ \mathfrak{X}_2=2 \partial_x-u\partial_u$.
\begin{subcase}
	$\beta=-\alpha\ne0,\ \gamma\ne0,\ c_1=0$
\end{subcase}
For this case the only symmetries that keeps invariant both Eq.~\eqref{heat} and \eqref{tc2} are 
\begin{align*}
	Z_1&=2 \partial_x-u\partial_u,\\
	Z_2&=(1-e^{x/2}u)e^{-\frac{x}{2}+\gamma t}\partial_u\\
	Z_3&= \frac{2}{\gamma}e^{\gamma t}\partial_x +  \frac{\gamma (t+x)e^{x/2}u -  (1+\gamma (t+x)  )}{\gamma}e^{-\frac{x}{2}+\gamma t}\partial_u
\end{align*}

The common denominator for all the above cases is the symmetry $2 \partial_x-u\partial_u$. This symmetry gives the invariant solution
$$
	u(x,t)= C(t) e^{-x/2}.
$$
Going back to the Eq.~\eqref{u01} we see that it corresponds to the \emph{trivial} assumption 
$$
	u(x,t) = C(t),
$$
i.e. solutions that do not have dependency on the variable $x$.

\subsection{The condition for barrier option}

\begin{case}$f(\zeta)=\alpha(\zeta-\beta)^2,\,\alpha\ne0$
\end{case}
Let the arbitrary element of the Lie algebra spanned by \eqref{u03} and \eqref{u03b}, $\mathfrak X = c_1 \mathfrak X_1+c_2 \mathfrak X_2+c_3 \mathfrak X_3$. Using \eqref{boa} and the equivalence transformations the first condition turns to the ODE:
\begin{multline}\label{ode1}
	\left(2 \sqrt{2} r \left(c_1+2 t c_3\right)+\sigma  \left(\left(2-\sqrt{2} \sigma \right) c_1+4 c_2-2 \sqrt{2} t\sigma  c_3t\right)\right)H\\
		+4 \sqrt{2} c_3 H\log\lvert H\rvert   	-2 \sqrt{2} \left(c_1+4 t c_3\right) H^\prime=0
\end{multline}
two cases are discerned,  $c_3\ne0$ and $c_3=0$:
\begin{subcase}
	$c_3\ne0$
\end{subcase}
The solution of \eqref{ode1} is
\begin{equation}\label{H1a}
	H(t)=e^{-\frac{\sigma}{\sqrt{2} }\lambda+(r- \frac{1}{2} \sigma ^2)\left(t+\mathcal A \sqrt{\kappa+t}\right)}
\end{equation}
where $\kappa = \frac{c_1}{4c_3},\ \lambda=\frac{c_1+2c_2}{2c_3}$ and $\mathcal A$ the constant of integration. Using this solution, conditions \eqref{bob} and \eqref{barrierCond1a} we get the ODE
\begin{equation}\label{ode2}
	\beta -R-\left(\kappa+t \right) R^\prime=0.
\end{equation}
Its solution is
\begin{equation}\label{R1a}
	R(t)=\frac{\mathcal B+ \beta t}{\kappa+t},
\end{equation}
where $\mathcal B$ the constant of integration. 

Having found the functions $H,R$ admitted by the symmetries we proceed with the reduction of the Eq.~\eqref{heat}. From the invariant surface condition \eqref{isc} the invariant solution is
\begin{equation}\label{u1a}
	u(x,t)=\frac{e^{-\frac{1}{2} \left(x+\lambda\right)} \left(4 e^{\frac{\lambda}{2}} \beta t +F\left(\frac{x + t+\lambda}{\sqrt{\kappa +t}}\right)\right)}{4(\kappa +t)}.
\end{equation}
Substituting \eqref{u1a} to \eqref{heat} for this particular case for $f$ we arrive at the reduction
\begin{equation*}
	16 e^{\frac{1}{2}  \lambda} \beta  \kappa  (\beta \kappa +\frac{1}{ \alpha} )-8( \frac{1}{ 2\alpha}+ \beta  \kappa ) F(\zeta)+e^{-\frac{1}{2}  \lambda} F(\zeta)^2-\frac{2}{ \alpha}  \zeta  F^\prime+\frac{1}{ \alpha}  F^{\prime \prime }=0,
\end{equation*} 
where $\zeta =  \frac{x + t+\lambda}{\sqrt{\kappa +t}}$. Although the general solution of this equation cannot be found in a closed form, it has the special solution
$$
	F(\zeta) = 4e^{\frac{1}{2} \lambda } \left(\beta  \kappa-\frac{3 }{2\alpha \zeta ^2}\right).
$$ 
Using it in combination with \eqref{u1a} we arrive to the invariant solution 
$$
u(x,t) = \frac{ \left(\beta  (t+x+\lambda)^2-\frac{6}{\alpha}\right)}{(t+x+\lambda )^2}e^{-x/2}.
$$
Finally, using the equivalence transformations and the boundary condition \eqref{barrierCond1} we arrive to the similarity solution for Eq.~\eqref{u01} with $f(\zeta)=\alpha(\zeta-\beta)^2$
$$
	u(x,t) = \beta -\frac{24 \sigma ^2}{\alpha \left(2 \lambda  \sigma +\sqrt{2}( \sigma ^2-2 r)t+2 \sqrt{2} \log\lvert x\rvert\right)^2}
$$
with
$$
	H(t) = e^{-\frac{\sigma}{\sqrt{2} }\lambda+(r- \frac{1}{2} \sigma ^2)\left(t+\mathcal A \sqrt{\kappa+t}\right)}
$$
and
$$
	R(t)=\beta -\frac{12   \sigma ^2}{\mathcal A^2 \alpha (t+\kappa ) \left(\sigma ^2-2r\right)^2}
$$
\begin{subcase}
	$c_3=0$
\end{subcase}
For $c_3=0$ the solution of \eqref{ode1} becomes
\begin{equation}\label{H1b}
	H(t)=\mathcal Ae^{\frac{1}{2}(2r - \sigma^2)t+  \lambda t} 
\end{equation}
where $\lambda=\frac{\left(\mathbf{c}_1+2 \mathbf{c}_2\right)}{\sqrt{2}\mathbf{c}_1}\sigma$\footnote{$c_1\ne0$ otherwise $c_2$ must also be zero.} and $\mathcal A\ne0$ the constant of integration. Using this solution, conditions \eqref{bob} and \eqref{barrierCond1a} we get the ODE
\begin{equation}\label{ode3}
	c_1 R^\prime=0.
\end{equation}
Hence
\begin{equation}\label{R1b}
	R(t)= \mathcal B,
\end{equation}
where $\mathcal B$ is constant. 

Having found the functions $H,R$ admitted by the symmetries we proceed with the reduction of the Eq.~\eqref{heat}. From the invariant surface condition \eqref{isc} the invariant solution is
\begin{equation}\label{u1b}
	u(x,t)=e^{-x/2} F\left(\frac{x+t \left(1-\sqrt{2} \lambda \right)}{1-\sqrt{2} \lambda }\right).
\end{equation}
Substituting \eqref{u1b} to \eqref{heat} for this particular case for $f$ we arrive to the reduction
\begin{equation*}
	\left(\sqrt{2} \lambda -1\right) \left(\alpha\left(1-\sqrt{2} \lambda \right) (\beta -F(\zeta))^2-\sqrt{2}  \lambda  F^\prime\right)- F^{\prime \prime }=0,
\end{equation*} 
where $\zeta = \frac{x+t \left(1-\sqrt{2} \lambda \right)}{1-\sqrt{2} \lambda }$. Although this equation cannot be analytically solved it has for $\lambda=0$\footnote{Actually for $\lambda=0$ its general solution can be given but  in implicit form containing transcendental functions, hence not suitable for our analysis.} the special solution 
$$
	F(\zeta) =\beta-\frac{6\alpha }{\alpha\zeta^2} .
$$ 
Using it in combination with \eqref{u1b} we arrive at the invariant solution 
$$
u(x,t) = e^{-x/2} \left(\beta-\frac{6}{\alpha(t+x)^2} \right).
$$
Finally, using the equivalence transformations and the boundary condition \eqref{barrierCond1} we obtain the similarity solution for Eq.~\eqref{u01} with $f(\zeta)=\alpha(\zeta-\beta)^2$
$$
	u(x,t) = \beta -\frac{12 \sigma ^2}{ \alpha \left(t \left(\sigma ^2-2 r\right)+2 \log\lvert x\rvert\right)^2}
$$
with
$$
	H(t)=\mathcal Ae^{\frac{1}{2}(2r - \sigma^2)t} 
$$
and
$$
	R(t)=\beta -\frac{3  \sigma ^2}{\alpha \log^2\lvert\mathcal A\rvert}
$$

\begin{case} $f(\zeta) =-\frac{\gamma}{\beta}(\alpha +\beta \zeta)\left(\delta +\log\lvert\alpha+\beta\zeta\rvert\right),\,\be,\gamma\neq 0$
\end{case}
Let the arbitrary element of the Lie algebra spanned by \eqref{u03} and \eqref{u03a} be $\mathfrak X = c_1 \mathfrak X_1+c_2 \mathfrak X_2+c_3 \mathfrak X_3+c_4\mathfrak X_4$. Using \eqref{boa} and the equivalence transformations the first condition turns to the ODE: 
\begin{equation}\label{ode4}
	2 \left(c_3+\frac{c_4}{\gamma}e^{t \gamma }\right)+c_1 \left(1+\frac{\sqrt{2} r}{\sigma }-\frac{\sigma }{\sqrt{2}}-\frac{\sqrt{2} H'}{\sigma  H}\right)=0,
\end{equation}
where $c_1\ne0$.
The solution of \eqref{ode4} is
\begin{equation}\label{H2a}
	H(t)=\mathcal A e^{\frac{1}{2}(2r -\sigma^2)t+\frac{1}{2} \sigma  \left(\lambda t+\mu e^{\gamma t}\right)}, 
\end{equation}
where $\mu = \frac{2\sqrt{2}c_4}{\gamma^2c_1},\ \lambda=\sqrt{2}(1+2\frac{c_3}{c_1})$ and $\mathcal A$ the constant of integration.  Using the above solution, conditions \eqref{bob} and \eqref{barrierCond1a} we get the ODE
\begin{multline}\label{ode5}
	e^{t \gamma } \left(\kappa +2 \gamma  \mu  \left(\sqrt{2} \sigma+\sigma\gamma  \lambda t +2 \gamma  \log\mathcal A\right)\right) (\alpha +\beta  R)\\
	+2 e^{2 \gamma t } \gamma ^2 \mu ^2 \sigma  (\alpha +\beta  R)-8 \beta  \sigma  R'=0.
\end{multline}
Its solution is
\begin{equation}\label{R2a}
	R(t)=-\frac{\alpha }{\beta }+\mathcal Be^{\frac{\left(\kappa +\gamma  \mu  \left(2 \sqrt{2}-2 \lambda +2 \gamma  \lambda t +\gamma  \mu e^{t \gamma }  \right) \sigma +4 \gamma ^2 \mu  \log\mathcal A\right)}{8 \gamma  \sigma }e^{\gamma t} } 
\end{equation}
where  $\kappa = 8\frac{c_2}{\sigma c_1}$ and $\mathcal B$ the constant of integration. 

Having found the functions $H,R$ admitted by the symmetries we proceed with the reduction of the Eq.~\eqref{heat}. From the invariant surface condition \eqref{isc} the invariant solution is
\begin{multline}\label{u2a}
	u(x,t)=-\frac{ \alpha }{\beta }e^{-x/2}+e^{\frac{1}{8} \left(\frac{\left(2 \gamma  \left(\sqrt{2} ) \gamma(t+x) -\lambda \right) \mu +\kappa  \sigma \right)}{\gamma }e^{t \gamma } -2  \left(\sqrt{2} \lambda -2\right)t-\gamma  \mu ^2e^{2 t \gamma } \right)}\\
		 F\left(t+x-\frac{ \lambda }{\sqrt{2}}t-\frac{\mu }{\sqrt{2}}e^{t \gamma } \right).
\end{multline}
Substituting \eqref{u2a} to \eqref{heat} for this particular case for $f$ we arrive at the reduction
\begin{multline*}
	F(\zeta ) \left(2 \gamma  (2 \delta +\zeta )-1+\sqrt{2} \lambda +4 \gamma  (\log\beta +\log F(\zeta ))\right)+2 \left(\sqrt{2} \lambda -2\right) F^\prime\\
		-4 F^{\prime \prime }=0,
\end{multline*} 
where $\zeta = t+x-\frac{ \lambda }{\sqrt{2}}t-\frac{\mu }{\sqrt{2}}e^{t \gamma }$. Although for this equation the general solutions cannot be found in a closed form,  two particular solutions can be obtained, one when $\lambda\ne0$ and one when $\lambda=0$. Each of them is considered separately:
\begin{itemize}
	\item	$\lambda\ne0$
		\begin{equation}\label{specialSol1}
			F(\zeta) = \Delta_1 e^{ -\frac{1}{2} \zeta -\log\beta}
		\end{equation}
		with $\Delta_1=e^{-\delta}$. Using it in combination with \eqref{u2a} we arrive at the invariant solution 
		$$
			u(x,t) = \frac{e^{-\frac{x}{2}} }{\beta}\left(\Delta_1e^{-\frac{\gamma ^2 \mu ^2e^{2  \gamma t} -e^{ \gamma t} \left(2 \gamma  \left(\sqrt{2}+\sqrt{2} (t+x) \gamma -\lambda \right) \mu +\kappa  \sigma \right)}{8 \gamma }}- \alpha \right).
		$$
		Again, using the equivalence transformations and the boundary condition \eqref{barrierCond1} we arrive to the similarity solution for Eq.~\eqref{u01} with $f(\zeta)=-\frac{\gamma}{\beta}(\alpha +\beta \zeta)\left(\delta +\log\lvert\alpha+\beta\zeta\rvert\right)$
		$$
			u(x,t) = \frac{\Delta_1}{\beta}e^{\frac{e^{\gamma t} \left(\sigma  \left(\kappa  \sigma +\gamma  \mu  \left(2 \sqrt{2}-2 \lambda -\gamma  \mu e^{\gamma t} +2  \gamma  \sigma t\right)\right)-4 r \gamma ^2 \mu t\right)}{8 \gamma  \sigma }}\lvert x\rvert^{\frac{\gamma\mu}{2\sigma}e^{\gamma t}}-\frac{\alpha }{\beta }
		$$
		with
		$$
			H(t) = \mathcal A e^{\frac{1}{2}(2r -\sigma^2)t+\frac{1}{2} \sigma  \left(\lambda t+\mu e^{\gamma t}\right)}
		$$
		and
		$$
			R(t)=\frac{\Delta_1}{\beta}e^{\frac{e^{\gamma t} \left(\sigma  \left(\gamma  \mu  \left(2 \sqrt{2}-2 \lambda +2  \gamma  \lambda t+ \gamma  \mu e^{\gamma t}\right)+\kappa  \sigma \right)+4 \gamma ^2 \mu  \log\mathcal A\right)}{8 \gamma  \sigma }}-\frac{\alpha }{\beta }.
		$$
	\item  $\lambda=0$
		\begin{equation}\label{specialSol2}
			F(\zeta) = \Delta_2 e^{\frac{1}{4} \gamma  \zeta ^2+\mathcal C \zeta}
		\end{equation}
		where $\mathcal C$ is a constant and $\Delta_2 = e^{\frac{\left(\gamma  (2-4 \delta )+(1+2\mathcal C)^2-4 \gamma \log\beta\right)}{4\gamma }}$. Using it in combination with \eqref{u2a} we arrive to the invariant solution 
		\begin{multline*}
			u(x,t) = e^{-x/2} \left(\Delta_2 e^{\frac{1}{8} \left(2 (t+x) (2+(t+x) \gamma +4 \mathcal C)+\frac{e^{\gamma t} \left(\kappa  \sigma -4 \sqrt{2} \gamma  \mu  \mathcal C\right)}{\gamma }\right)} -\frac{\alpha}{\beta }\right).
		\end{multline*} 
		Again, using the equivalence transformations and the boundary condition \eqref{barrierCond1} we obtain the similarity solution for Eq.~\eqref{u01} with $f(\zeta)=-\frac{\gamma}{\beta}(\alpha +\beta \zeta)\left(\delta +\log\lvert\alpha+\beta\zeta\rvert\right)$
		\begin{multline*}
			u(x,t) = \\
				\Delta_2 e^{\frac{\sigma ^2 \left(\kappa  \sigma -4 \sqrt{2} \gamma  \mu  \mathcal C\right)e^{t \gamma } +\gamma  \left(\sigma ^2-2 r\right) \left(\sigma  \left(\gamma  \sigma t+2 \sqrt{2} (1+2 \mathcal C)\right)-2 r \gamma t\right)t +4 \gamma ^2 \log^2\lvert x\rvert}{8\gamma  \sigma ^2}}\times\\
					 \lvert x\rvert^{\gamma  \left(\frac{1}{2}-\frac{r}{\sigma ^2}\right)t+\frac{1+2\mathcal C}{\sqrt{2} \sigma }}-\frac{\alpha }{\beta }
		\end{multline*} 
		with
		$$
			H(t) = \mathcal A e^{\frac{1}{2}(2r -\sigma^2)t+\frac{1}{2} \sigma \mu e^{\gamma t}}
		$$
		and
		$$
			R(t)= \Delta_2 e^{\frac{1}{8} \left(e^{2 t \gamma } \gamma  \mu ^2+\frac{e^{t \gamma } \left(2 \sqrt{2} \gamma  \mu +\kappa  \sigma \right)}{\gamma }+\frac{4 \gamma  \log^2\lvert\mathcal A\rvert}{\sigma ^2}\right)} \mathcal A^{\frac{ \gamma  \mu e^{\gamma t}+\sqrt{2} (1+2\mathcal C)}{2 \sigma }}-\frac{\alpha }{\beta }.
		$$
\end{itemize}

\section{Conclusion}

In the present paper a generalization of the celebrated Black--Scholes--Merton equation \eqref{u02}  was proposed and studied under the prism of the modern group analysis or symmetry method. To that end, we harnessed the advantage that the equivalence transformations offer when studying classes of differential equations, the knowledge  of the best representative for this class of equations. This fact substantially simplifies the task of classifying it and obtaining its point symmetries.  

Through this classification interesting cases, from the point of symmetries, arise. Nonlinear equations in general have few or no symmetries so cases that augment the set of symmetries at disposal are like an oasis in the desert. Quite commonly a dynamical system possessing an ample number of symmetries is more probable to relate with a physical system or model a more realistic process. Furthermore, in the case that we wish to study a boundary problem, because of the fact that not all of the symmetries admit the boundary and its condition, some of the symmetries will be excluded. Hence the bigger the set of symmetries the bigger the probability that some will survive the scrutiny of the boundary conditions and give an invariant solution for the problem in its entirety. 

For the equation studied here both sides of this fact were revealed.  The terminal condition was too strict and gave only trivial solutions. On the other hand the boundary condition imposed for the barrier option allowed us to obtain  non trivial invariant solutions, undoubtedly, the arbitrary functions involved in the boundary condition helped in that direction.

The insight provided through the above symmetry analysis might prove practical to anyone looking for a more realistic economic model without departing from the reasoning behind the Black--Scholes-Merton equation that made it such a successful model on the first place. Moreover,  when  one studies more exotic kinds of options that gain ground in the Asian markets that in turn play an ever increasing role in the world market. Last but not least although the Black--Scholes-Merton model is a standard way to price traditional options  it encounters difficulties with exotic ones. The nonlinear variants of the traditional model given here, along with the found analytical solutions, might turn the table in that respect.  We leave to the interested reader the possible economical interpretation and use of the obtained results.

\section*{Acknowledgements} 

Y. Bozhkov would like to thank FAPESP and CNPq, Brasil, for partial financial support. S. Dimas is grateful to FAPESP (Proc. \#2011/05855-9)  for the financial support and IMECC-UNICAMP for their gracious hospitality.

\bibliographystyle{model3-num-names}
\bibliography{Bibliography}

\end{document}